\theoremstyle{plain}
\newtheorem{theorem}{Theorem}
\newtheorem*{intropr}{Proposition}
\newtheorem*{introth}{Main Theorem}
\newtheorem{corollary}[theorem]{Corollary}
\newtheorem{lemma}[theorem]{Lemma}
\newtheorem{proposition}[theorem]{Proposition}
\theoremstyle{remark}
\newcommand{\reel}{\mathbb{R}}
\newcommand{\vf}{\varphi}
\newcommand{\bg}{\medskip\goodbreak}
\newenvironment{enumeratea}{\begin{enumerate}%
	[\upshape (a)]}{\end{enumerate}}
\title[An Optimal Inequality For The Tangent Function]
{An Optimal Inequality For The Tangent Function}
\author[Omran Kouba]{Omran Kouba$^\dag$}
\address{Department of Mathematics \\
Higher Institute for Applied Sciences and Technology\\
P.O. Box 31983, Damascus, Syria.}
\email{omran\_kouba@hiast.edu.sy}
\keywords{inequality, tangent function, power series expansion.}
\subjclass[2010]{26D05.}
\thanks{$^\dag$ Department of Mathematics, Higher Institute for Applied Sciences and Technology.}
\begin{document}
\parindent=0pt
\begin{abstract}
In this note we deal with some inequalities for the tangent function that are valid for
$x$ in $(-\pi/2,\pi/2)$. These inequalities are optimal in the sense that the best values of 
the exponents involved are obtained. \par
\end{abstract}
\smallskip\goodbreak

\parindent=0pt
\maketitle
\section{\sc Introduction }\label{sec1}
\bg
\parindent=0pt
\qquad  The story started when I wanted to provide my students of  ``Basic Calculus'' class, with a way to prove that
\begin{equation}\label{E:eq1}
\lim_{x\to0}\frac{\tan x-x}{x^3}=\frac{1}{3}
\end{equation}
without recourse to any advanced topics or to the L'H\^opital's rule. So, I came up with the following proposition :\bg

\begin{intropr} 
 For every $x\in(0,\pi/2)$ the following inequality holds:
\begin{equation}\label{E:eqpr1}
x+\frac{x^3}{3}<\tan x<x+\frac{\tan^3x}{3}
\end{equation}
\end{intropr}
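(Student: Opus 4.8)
The plan is to treat the two inequalities separately, in each case reducing the claim to the statement that an auxiliary function vanishing at $0$ has positive derivative on $(0,\pi/2)$. The only external ingredient I expect to need is the elementary fact that $\tan x>x$ on $(0,\pi/2)$, which itself follows immediately by noting that $x\mapsto\tan x-x$ vanishes at $0$ and has derivative $\sec^2 x-1=\tan^2 x>0$, so that it is strictly increasing and hence positive.

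For the left inequality I would set $f(x)=\tan x-x-x^3/3$, so that $f(0)=0$. Differentiating gives
\[
f'(x)=\sec^2 x-1-x^2=\tan^2 x-x^2=(\tan x-x)(\tan x+x).
\]
Both factors are positive on $(0,\pi/2)$ by the preliminary fact, so $f'>0$ there and hence $f(x)>0$, which is exactly $x+x^3/3<\tan x$.

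For the right inequality I would set $g(x)=x+\tfrac13\tan^3 x-\tan x$, again with $g(0)=0$. Here the pleasant surprise is the derivative: using $\frac{d}{dx}\tan^3 x=3\tan^2 x\,\sec^2 x$ together with $\sec^2 x=1+\tan^2 x$, one computes
\[
g'(x)=1+\tan^2 x\,\sec^2 x-\sec^2 x=\tan^4 x.
\]
Thus $g'\geq 0$, with strict positivity on $(0,\pi/2)$, so $g(x)>0$, giving $\tan x<x+\tfrac13\tan^3 x$.

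I do not anticipate any serious obstacle: both halves collapse to a sign computation for a single derivative, and neither requires Taylor expansions or L'H\^opital, in keeping with the pedagogical aim. The one spot that rewards a moment's care is verifying that $g'$ telescopes all the way down to $\tan^4 x$; once the identity $\sec^2 x=1+\tan^2 x$ is substituted the lower-order terms cancel exactly. The left inequality's derivative factoring as $(\tan x-x)(\tan x+x)$ then makes its sign transparent given $\tan x>x$, so the argument is complete.
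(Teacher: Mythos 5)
Your proof is correct and is essentially identical to the paper's: the same two auxiliary functions ($\tan x - x - x^3/3$ and $x + \tfrac13\tan^3 x - \tan x$), the same derivative computations yielding $\tan^2 x - x^2$ and $\tan^4 x$, and the same vanishing-at-zero-plus-monotonicity conclusion. Your only additions are making explicit the factoring $(\tan x - x)(\tan x + x)$ and the preliminary proof that $\tan x > x$, which the paper takes as well known.
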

\bg
\qquad Clearly, the limit in \eqref{E:eq1} follows easily from this Proposition.
 But this was not the end of the story, it was just the beginning
of my investigation. In fact, the inequality \eqref{E:eqpr1}
 means that $3(\tan x-x)$ is somewhere between $x^3$ and $\tan^3x$, but where exactly ?
\bg
 \qquad In order to describe our results, an important role is played by 
 the family of functions $(f_\gamma)_{\gamma\in[0,3]}$ defined on $[0,\pi/2)$ by 
\begin{equation}\label{E:fdef}
f_\gamma(x)=x^{3-\gamma}\tan^{\gamma}x.
\end{equation}
\bg
\qquad Because of the well-known
inequality $\tan x\geq x$ for $0\leq x<\pi/2$, we see that the family 
$(f_\gamma)_{\gamma\in[0,3]}$ is increasing in the sense that 
$f_\alpha\leq f_\beta$ for $\alpha<\beta$. Using this family, 
we can reformulate \eqref{E:eqpr1} by saying that
\[
f_0(x)<3(\tan x-x)<f_3(x),\quad\hbox{ for $0<x<\frac{\pi}{2}$},
\] 

\qquad So, it is natural to be interested in
 identifying the best $\alpha$ and $\beta$ such that $f_\alpha(x)<3(\tan x-x)<f_\beta(x)$ for $0<x<\pi/2$. 
We were able to completely answer this question, our results are summarized in the following two 
statements :
\bg
\begin{intropr}
If for every $x\in(0,\pi/2)$ we have $f_\alpha(x)\leq 3(\tan x-x)\leq f_\beta(x)$,  where
 $f_{\gamma}$ is defined in \eqref{E:fdef},  then
$\alpha\leq1$ and $\beta\geq 6/5$.
\end{intropr}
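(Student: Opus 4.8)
The two required bounds come from examining the two endpoints of the interval separately: the constraint $\beta\geq 6/5$ emerges from the local behaviour near $0$, while $\alpha\leq1$ is forced by the behaviour as $x\to\pi/2^-$. The plan is therefore to isolate each inequality at the endpoint where it is most restrictive, rather than attempting a global comparison.

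First I would treat the upper bound near $0$. Using the expansion $\tan x=x+\frac{1}{3}x^3+\frac{2}{15}x^5+o(x^5)$ one obtains
\[
3(\tan x-x)=x^3+\frac{2}{5}x^5+o(x^5),
\]
while writing $\tan x=x\bigl(1+\frac{1}{3}x^2+o(x^2)\bigr)$ and raising to the power $\gamma$ gives
\[
f_\gamma(x)=x^{3-\gamma}\tan^\gamma x=x^3\left(1+\frac{\gamma}{3}x^2+o(x^2)\right)=x^3+\frac{\gamma}{3}x^5+o(x^5).
\]
Consequently
\[
f_\beta(x)-3(\tan x-x)=\left(\frac{\beta}{3}-\frac{2}{5}\right)x^5+o(x^5).
\]
If $\beta<6/5$ the leading coefficient $\frac{\beta}{3}-\frac{2}{5}$ is strictly negative, so $f_\beta(x)-3(\tan x-x)<0$ for all sufficiently small $x>0$, contradicting the assumed upper bound. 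Hence $\beta\geq 6/5$.

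Next I would treat the lower bound near $\pi/2$. Since $\tan x\to+\infty$ as $x\to\pi/2^-$, we have $3(\tan x-x)\sim 3\tan x$, and therefore
\[
\frac{f_\alpha(x)}{3(\tan x-x)}\sim\frac{x^{3-\alpha}\tan^\alpha x}{3\tan x}=\frac{x^{3-\alpha}}{3}\,\tan^{\alpha-1}x.
\]
As $x\to\pi/2^-$ the factor $x^{3-\alpha}$ tends to the finite positive limit $(\pi/2)^{3-\alpha}$, so the behaviour of the ratio is governed by $\tan^{\alpha-1}x$. If $\alpha>1$ this tends to $+\infty$, whence $f_\alpha(x)>3(\tan x-x)$ for $x$ close enough to $\pi/2$, contradicting the assumed lower bound. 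Hence $\alpha\leq1$.

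The only delicate point is the bookkeeping in the two asymptotic comparisons: near $0$ one must carry the expansions to order $x^5$, since the $x^3$ terms cancel and the sign is decided by the next coefficient; near $\pi/2$ one must confirm that the bounded polynomial factor $x^{3-\alpha}$ cannot rescue the inequality against the divergence of $\tan^{\alpha-1}x$. Both are routine once the correct order of vanishing at $0$ and the correct dominant growth at $\pi/2$ are identified. The essential structural observation is simply that the upper estimate is most stringent at $0$ and the lower estimate is most stringent at $\pi/2$.
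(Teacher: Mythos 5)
Your proposal is correct and takes essentially the same approach as the paper: the paper packages the two endpoint analyses into a single function $\varphi(x)=\log\bigl(3(\tan x-x)/x^3\bigr)\big/\log\bigl(\tan x/x\bigr)$, squeezed between $\alpha$ and $\beta$, whose limit $6/5$ at $0$ (computed from the very same Taylor expansions you use) forces $\beta\geq 6/5$, and whose limit $1$ at $\pi/2$ (from the same dominance of $\tan x$ there) forces $\alpha\leq 1$. Your direct difference comparison near $0$ and ratio comparison near $\pi/2$ are just an unpacked, log-free version of that argument.
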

\bg
\begin{introth}  The following two inequalities hold :
\begin{enumeratea}
\item For every $x\in(0,\pi/2)$ we have $f_1(x)<3(\tan x-x)$, 
\item For every $x\in(0,\pi/2)$ we have $3(\tan x-x)<f_{6/5}(x)$. 
\end{enumeratea}
where  $f_{\gamma}$ is defined in \eqref{E:fdef}. Equivalently,
\begin{equation}\label{E:eqth}
\forall\,x\in\left(0,\frac{\pi}{2}\right),
\qquad x+\frac{1}{3}x^2\tan x<\tan x<x+\frac{1}{3}x^{9/5}\tan^{6/5} x.
\end{equation}
\end{introth}
\bg
\qquad Before we embark in the proof of our results, it is worth mentioning that there is
a lot of similar inequalities involving trigonometric functions in the literature \cite{bec,guo1,guo2,Qi}. For instance, the Becker-Stark's inequality
\cite{bec} states that
\[
\frac{8x}{\pi^2-4x^2}<\tan x<\frac{\pi^2 x}{\pi^2-4x^2},\qquad \hbox{for $0<x<\frac{\pi}{2}$}.
\]
\bg
\qquad Also, in \cite{Qi} the authors prove, among other things, that for  $0<x<\frac{\pi}{2}$, one has
\begin{equation}\label{E:eqQi}
x+\frac{x^3}{3}+\frac{2}{15}x^4\tan x<\tan x<
x+\frac{x^3}{3}+\left(\frac{2}{\pi}\right)^4x^4\tan x.
\end{equation}
\bg
\qquad Numerical evidence shows that the upper inequality in \eqref{E:eqth} is sharper than the upper
inequality in \eqref{E:eqQi} for $x\in(0,x_0)$ where $x_0\approx 1.2332$, and that the lower
 inequality in \eqref{E:eqth} is sharper than the lower
inequality in \eqref{E:eqQi} for $x\in(x_1,\pi/2)$ where $x_1\approx 1.5255$. So the two results are complementary but not
comparable.

\bigskip\goodbreak
\section{\sc Results and Proofs}\label{sec2}
\bg
\qquad Clearly, the next Proposition \ref{pr1} follows from our main Theorem \ref{th3}, 
but it can be elementarily proved directly, our aim from presenting the proof is just
to compare the degree of difficulty.
\bg
\begin{proposition} \label{pr1}
 For every $x\in(0,\pi/2)$ the following inequality holds:
\begin{equation*}
x+\frac{x^3}{3}<\tan x<x+\frac{\tan^3x}{3}
\end{equation*}
\end{proposition}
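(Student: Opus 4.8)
The plan is to handle the two inequalities separately, in each case by introducing an auxiliary function that vanishes at the origin and showing that it is strictly increasing on $(0,\pi/2)$; the sign of the function then follows from its value at $0$. The only external input I intend to use is the well-known inequality $\tan x>x$ for $x\in(0,\pi/2)$, which is already invoked in the introduction. The pleasant feature to exploit is that, after differentiation, both auxiliary functions have derivatives that collapse to remarkably simple expressions once one substitutes $\sec^2x=1+\tan^2x$.

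For the lower bound I would set $g(x)=\tan x-x-\tfrac{1}{3}x^3$, so that $g(0)=0$. Differentiating gives
\[
g'(x)=\sec^2x-1-x^2=\tan^2x-x^2=(\tan x-x)(\tan x+x).
\]
Both factors are strictly positive on $(0,\pi/2)$ — the first by the cited inequality $\tan x>x$, the second trivially — hence $g'(x)>0$, so $g$ is strictly increasing and $g(x)>g(0)=0$. This yields $x+\tfrac{1}{3}x^3<\tan x$.

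For the upper bound I would set $h(x)=x+\tfrac{1}{3}\tan^3x-\tan x$, again with $h(0)=0$. Here the computation is the satisfying one:
\[
h'(x)=1+\tan^2x\,\sec^2x-\sec^2x=1+\sec^2x(\tan^2x-1)=1+(1+\tan^2x)(\tan^2x-1)=\tan^4x,
\]
using $\sec^2x=1+\tan^2x$ in the last two steps. Since $\tan^4x>0$ on $(0,\pi/2)$, the function $h$ is strictly increasing, so $h(x)>h(0)=0$, which is exactly $\tan x<x+\tfrac{1}{3}\tan^3x$.

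I do not expect any genuine obstacle here: both derivatives telescope to nonnegative quantities, and the whole argument is an elementary monotonicity computation, which is precisely the point of presenting this Proposition separately — to contrast its ease with the harder sharp exponents of the Main Theorem. The only place requiring a little care is carrying out the algebraic simplification of $h'(x)$ correctly, and noting that the strict positivity of each derivative (not merely nonnegativity) is what upgrades the conclusions to strict inequalities on the open interval.
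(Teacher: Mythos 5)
Your proof is correct and follows essentially the same route as the paper: the same two auxiliary functions $g(x)=\tan x-x-\tfrac{1}{3}x^3$ and $h(x)=x+\tfrac{1}{3}\tan^3x-\tan x$, the same derivative computations $g'(x)=\tan^2x-x^2>0$ and $h'(x)=\tan^4x>0$, and the same monotonicity argument from the value $0$ at the origin. The only cosmetic difference is that you factor $\tan^2x-x^2=(\tan x-x)(\tan x+x)$ explicitly, while the paper simply cites $\tan^2 x - x^2 > 0$.
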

\begin{proof}
Indeed, let $g$ and $h$ be the functions defined on $[0,\pi/2)$ by
\begin{align*}
g(x)&=\tan x-x-\frac{x^3}{3},\\
h(x)&=\frac{\tan^3x}{3}+x-\tan x.
\end{align*}
\qquad Clearly,  for $x\in(0,\pi/2)$, we have $g'(x)=\tan^2x-x^2>0$ and $h'(x)=\tan^4x>0$. Thus, both $g$ and $h$
are monotonous increasing on the interval $(0,\pi/2)$, and the desired inequality follows since
$g(0)=h(0)=0$.
\end{proof}
\bg
\bigskip\goodbreak

\begin{proposition} \label{pr2}
If for some $0\leq\alpha,\beta\leq3$, we have
\[
\left(\frac{\tan x}{x}\right)^\alpha\leq \frac{3(\tan x-x)}{x^3}\leq \left(\frac{\tan x}{x}\right)^\beta
\]
for every $x\in(0,\pi/2)$, then $\alpha\leq1$ and $\beta\geq 6/5$.
\end{proposition}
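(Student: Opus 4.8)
The plan is to take logarithms and reduce the two-sided constraint on $\alpha$ and $\beta$ to the study of a single function whose limiting values at the two endpoints of $(0,\pi/2)$ yield the claimed bounds. Since $\tan x>x$ on $(0,\pi/2)$, both $3(\tan x-x)/x^3$ and $\tan x/x$ are strictly larger than $1$ there, so $\log(\tan x/x)>0$ and, after taking logarithms, I may divide the hypotheses by it. Setting
\[
\Phi(x)=\frac{\log\!\big(3(\tan x-x)/x^3\big)}{\log(\tan x/x)},
\]
the assumption becomes simply $\alpha\le\Phi(x)\le\beta$ for every $x\in(0,\pi/2)$. It then suffices to exhibit one endpoint limit of $\Phi$ equal to $1$ (to force $\alpha\le1$) and one equal to $6/5$ (to force $\beta\ge6/5$).

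First I would examine the behaviour as $x\to0^+$. From $\tan x=x+\tfrac13x^3+\tfrac{2}{15}x^5+O(x^7)$ I get $3(\tan x-x)/x^3=1+\tfrac25x^2+O(x^4)$ and $\tan x/x=1+\tfrac13x^2+O(x^4)$, so both logarithms are governed by their leading quadratic terms and
\[
\lim_{x\to0^+}\Phi(x)=\frac{2/5}{1/3}=\frac{6}{5}.
\]
Since $\Phi(x)\le\beta$ throughout, passing to the limit gives $\beta\ge6/5$. Note that this same limit only yields $\alpha\le6/5$, which is weaker than what is asserted, so a second computation at the other endpoint is genuinely needed.

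Next I would examine the behaviour as $x\to(\pi/2)^-$. Writing $t=\pi/2-x$, one has $\tan x=\cot t\sim 1/t\to+\infty$, so $\log\tan x\to+\infty$ while $\log x$ stays bounded. Because $3(\tan x-x)/x^3\sim 3\tan x/(\pi/2)^3$ and $\tan x/x\sim\tan x/(\pi/2)$, both the numerator and the denominator of $\Phi$ equal $\log\tan x$ plus a bounded term, whence
\[
\lim_{x\to(\pi/2)^-}\Phi(x)=1.
\]
Since $\alpha\le\Phi(x)$ throughout, passing to the limit gives $\alpha\le1$, completing the argument.

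The individual steps are routine; the one point that demands a little care is the passage to logarithms, which is legitimate precisely because $\tan x>x$ makes both $3(\tan x-x)/x^3$ positive and $\tan x/x>1$ on all of $(0,\pi/2)$, so the denominator $\log(\tan x/x)$ never vanishes. Once that is in place, the two one-sided limits do all the work, and I expect no real obstacle beyond keeping the asymptotics near $\pi/2$ honest.
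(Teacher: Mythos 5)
Your proof is correct and follows essentially the same route as the paper: both define the quotient $\varphi(x)=\log\bigl(3(\tan x-x)/x^3\bigr)/\log(\tan x/x)$, obtain the limit $6/5$ at $0^+$ from the Taylor expansion of $\tan x$ and the limit $1$ at $(\pi/2)^-$ from the blow-up of $\log\tan x$, and then squeeze $\alpha$ and $\beta$ against these limits. No gaps; the endpoint asymptotics you give match the paper's computation.
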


\begin{proof}

Suppose that for $x\in(0,\pi/2)$ we have

\[
\left(\frac{\tan x}{x}\right)^\alpha\leq \frac{3(\tan x-x)}{x^3}\leq \left(\frac{\tan x}{x}\right)^\beta,
\]
that is $\alpha\leq \vf(x)\leq \beta$ where $\vf$ is defined on $(0,\pi/2)$ by
\[
\vf(x)=\log\left(\frac{3(\tan x-x)}{x^3}\right)\left/\log\left(\frac{\tan x}{x}\right).\right.
\]
Now, since
\[
\vf(x)=\frac{\log(\tan x)+\log(1-x/\tan x)+\log 3-3\log(x)}{\log(\tan x)-\log x}
\]
we conclude that 
\begin{equation}\label{E:eqlm1}
\lim_{x\to\left(\frac{\pi}{2}\right)^-}\vf(x)=1.
\end{equation}
\bg
On the other hand, since in the neighborhood of $0$ we have
\[\frac{\tan x}{x}=1+\frac{x^2}{3}+\frac{2}{15}x^4+ O(x^6),
\]
we deduce that
\begin{align*}
\log\left(\frac{\tan x}{x}\right)&=\frac{x^2}{3}+O(x^4)\\
\log\left(\frac{3(\tan x-x)}{x^3}\right)&=\log\left(1+\frac{2}{5}x^2+ O(x^4)\right)\\
&=\frac{2}{5}x^2+ O(x^4).
\end{align*}
Thus, $\vf(x)=\frac{6}{5}+O(x^2)$, and consequently 
\begin{equation}\label{E:eqlm2}
\lim_{x\to0^+}\vf(x)=\frac{6}{5}.
\end{equation}
Therefore, \eqref{E:eqlm1} and \eqref{E:eqlm2}, together with the fact that
 $\alpha\leq\vf(x)\leq\beta$ for every $x\in(0,\pi/2)$, 
 imply that $\alpha\leq 1$ and $\beta\geq\frac{6}{5}$ as desired.
\end{proof}
\bg
\qquad Before we come to the proof of our main theorem, we will need the following technical lemma.
\begin{lemma}\label{lm}
Let $\vf $ be the function defined on $\reel$ by
\begin{equation}
\vf(x)=(9-24x^2)\cos(x)-9\cos(3x)-4x \sin(3x).
\end{equation}
Then $\vf(x)>0$ for $0<x\leq \pi/2$.
\end{lemma}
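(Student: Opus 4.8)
The plan is to expand $\vf$ into a power series about $0$ and to exploit the resulting alternating structure. Since $\vf$ is even, I would first replace $\cos 3x$ and $\sin 3x$ by their Maclaurin series (or use the triple-angle identities) and collect the coefficient of $x^{2n}$. A direct computation gives
\[
\vf(x)=\sum_{n\ge0}(-1)^n\frac{A_n}{(2n)!}x^{2n},\qquad A_n=96n^2-48n+9+\frac{(8n-27)9^n}{3}.
\]
One then checks the algebraic identities $A_0=A_1=A_2=A_3=0$, so that the expansion actually begins at $x^8$, while for $n\ge4$ one has $8n-27>0$ together with $96n^2-48n+9>0$, hence $A_n>0$. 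Thus
\[
\vf(x)=\sum_{n\ge4}(-1)^n b_n(x),\qquad b_n(x)=\frac{A_n}{(2n)!}x^{2n},
\]
is an alternating series whose leading term (at the even index $n=4$) is positive.

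Next I would invoke the Leibniz criterion. For fixed $x\in(0,\pi/2]$ the terms $b_n(x)$ tend to $0$ (the factorial dominates $A_n=O(n\,9^n)$), so once I show that $(b_n(x))_{n\ge4}$ is strictly decreasing, the bracketing
\[
\vf(x)=\bigl(b_4(x)-b_5(x)\bigr)+\bigl(b_6(x)-b_7(x)\bigr)+\cdots
\]
exhibits $\vf(x)$ as a sum of positive terms, which is exactly what is wanted.

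The main obstacle is therefore the uniform monotonicity $b_{n+1}(x)<b_n(x)$ on $(0,\pi/2]$, that is, $A_{n+1}x^2<A_n(2n+1)(2n+2)$. I would control $A_n$ by the crude two-sided estimate
\[
\frac{8n-27}{3}\,9^n<A_n<\frac{8(n-3)}{3}\,9^n\qquad(n\ge4),
\]
whose upper half follows from $96n^2-48n+9<9^n$ for $n\ge4$ (an easy induction). Since $x^2\le\pi^2/4$, it then suffices to establish
\[
18\pi^2(n-2)<(8n-27)(2n+1)(2n+2)\qquad(n\ge4).
\]
The left-hand side is linear while the right-hand side is cubic in $n$; the inequality already holds at $n=4$ (where it reads $36\pi^2<450$), and the cubic right-hand side outgrows the linear left-hand side, so it persists for every $n\ge4$. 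This yields the required decrease of $(b_n(x))$ and completes the argument.
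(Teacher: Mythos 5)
Your proof is correct and follows essentially the same route as the paper: expand $\vf$ in a power series about $0$ (your $A_n$ is exactly $3T_n$ in the paper's notation), observe that the coefficients vanish for $n\le 3$ and are positive for $n\ge 4$, and conclude via the alternating-series bracketing once the terms are shown to decrease on the relevant interval. The only difference is bookkeeping in the monotonicity step: the paper evaluates $(2n+1)(2n+2)T_n-3T_{n+1}$ exactly as a polynomial-plus-$9^{n-1}$ expression and checks positivity for $n\ge4$ (using $x^2\le 3$, since $\pi/2<\sqrt{3}$), whereas you sandwich $A_n$ between multiples of $9^n$ and use $x^2\le\pi^2/4$ to reduce to the cubic-versus-linear inequality $18\pi^2(n-2)<(8n-27)(2n+1)(2n+2)$ -- both are sound.
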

\begin{proof}
In order to determine the sign of $\vf(x)$ for $x\in(0,\pi/2]$, we will use power series expansion. Clearly, for every real $x$ we have
\begin{align*}
\vf(x)&=(9-24x^2)\sum_{n=0}^\infty\frac{(-1)^nx^{2n}}{(2n)!}-9\sum_{n=0}^\infty\frac{(-1)^n3^{2n}x^{2n}}{(2n)!}
-4x\sum_{n=1}^\infty\frac{(-1)^{n-1}3^{2n-1}x^{2n-1}}{(2n-1)!}\\
&=\sum_{n=0}^\infty\big(9+24(2n)(2n-1)-9\cdot 3^{2n}+4(2n)\cdot 3^{2n-1}\big)\frac{(-1)^nx^{2n}}{(2n)!}\\
&=3\sum_{n=0}^\infty\big(32n^2-16n+3)+(8n-27)\cdot 3^{2n-2}\big)\frac{(-1)^nx^{2n}}{(2n)!}
\end{align*}
Thus, for a real $x$ we have
\begin{equation}\label{E:eq26}
\vf(x)=3\sum_{n=0}^\infty(-1)^n\frac{T_n}{(2n)!}x^{2n},
\end{equation}
where,
\begin{equation}\label{E:eq27}
T_n=2(4n-1)^2+1+(8n-27)9^{n-1}.
\end{equation}
Noting that $T_0=T_1=T_2=T_3=0$ we conclude that \eqref{E:eq26} can be written as follows
\begin{equation}\label{E:eq28}
\vf(x)=3\sum_{n=4}^\infty  (-1)^n\frac{T_n}{(2n)!}x^{2n}.
\end{equation}
\qquad We recognize an alternating series since it is clear from \eqref{E:eq27} that $T_n>0$ for $n\geq4$. Now, if we show
that the sequence $\left(\frac{T_n}{(2n)!}x^{2n}\right)_{n\geq4}$ is decreasing for any $x\in(0,\pi/2]$ then 
this would imply that $\vf(x)>0$ for $x\in(0,\pi/2]$, because the first term in the series \eqref{E:eq28} is positive.\bg

\qquad Let $U_n$ be defined by,
\begin{equation}\label{E:eq29}
U_n=(2n+2)(2n+1)T_n-3T_{n+1}.
\end{equation}
a simple calculation shows that
\begin{align}\label{E:eq30}
U_n&=(4n^2+6n+2)\big(32n^2-16n+3 +(8n-27)9^{n-1}\big)\notag\\
&\quad\qquad-3(32n^2+48n+19+(8n-19)9^{n})\notag\\
&= 128n^4+128n^3-116n^2-158n-51+(32n^3-60n^2-362n+459)9^{n-1}\notag\\
&=B_n+A_n\cdot 9^{n-1}
\end{align}
where
\begin{align*}
B_n&=128n^4+128n^3-116n^2-158n-51\\
A_n&=32n^3-60n^2-362n+459
\end{align*}
Now, it is straightforward to check that
\begin{align*}
B_{n+1}&=128n^4+640n^3+1036n^2+437n+69(n-1),\\
A_{n+4}&=32n^3+324n^2+694 n+99.
\end{align*}
Thus, $A_n$ and $B_n$ are positive for $n\geq4$, and according to \eqref{E:eq30} we have $U_n>0$ for  $n\geq4$.
Using \eqref{E:eq29} we conclude that for $n\geq4$ and $x\in(0,\sqrt{3}]$ we have
\begin{equation*}
(2n+2)(2n+1)T_n>x^2T_{n+1}
\end{equation*} 
or, equivalently,
\begin{equation*}
\forall\,n\geq4,\quad\forall\,x\in\left(0,\sqrt{3}\right],\quad\frac{T_n}{(2n)!}x^{2n}>\frac{T_{n+1}}{(2n+2)!}x^{2n+2}.
\end{equation*} 
It follows that the sequence $\left(\frac{T_n}{(2n)!}x^{2n}\right)_{n\geq4}$ is decreasing for any $x\in(0,\sqrt{3}]$, and, as we have
already explained, this implies using \eqref{E:eq28} that $\vf(x)>0$ for  $x\in\left(0,\sqrt{3}\right]$, and the Lemma follows
since $\frac{\pi}{2}<\sqrt{3}$.
\end{proof}
\bg
\qquad With this technical lemma at hand, we can prove our Main Theorem.
\bg

\begin{theorem} \label{th3} The following two inequalities hold :
\begin{enumeratea}
\item For every $x\in(0,\pi/2)$ we have $x^2\tan x<3(\tan x-x)$, 
\item For every $x\in(0,\pi/2)$ we have $3(\tan x-x)<x^{9/5}(\tan x)^{6/5}$. 
\end{enumeratea}
\end{theorem}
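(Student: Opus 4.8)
The plan is to treat the two inequalities separately, since they are of quite different difficulty. Inequality (a) is elementary and needs nothing beyond the classical estimate $\tan x>x$, while inequality (b) is the substantial one and is precisely where Lemma~\ref{lm} enters.

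For part (a), I would set $g(x)=3(\tan x-x)-x^2\tan x$ and aim to show $g>0$ on $(0,\pi/2)$. Because $\cos x>0$ there, it is cleaner to study the equivalent quantity $h(x)=g(x)\cos x=(3-x^2)\sin x-3x\cos x$. One computes $h(0)=0$ and, after cancellation, $h'(x)=x(\sin x-x\cos x)$. Since $\sin x-x\cos x>0$ on $(0,\pi/2)$ (this is just $\tan x>x$), $h$ is strictly increasing, hence $h>0$, and therefore $g>0$. This proves $f_1(x)=x^2\tan x<3(\tan x-x)$, with no appeal to the lemma.

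For part (b), the fractional exponents make a direct comparison awkward, so I would pass to logarithms. Set
\[
P(x)=\frac{9}{5}\log x+\frac{6}{5}\log\tan x-\log\big(3(\tan x-x)\big),
\]
so that (b) is exactly $P>0$ on $(0,\pi/2)$. A short expansion at the origin, of the kind already carried out in the proof of Proposition~\ref{pr2}, shows that the logarithmic singularities cancel and $\lim_{x\to0^+}P(x)=0$. It therefore suffices to prove $P'>0$ throughout the interval. Differentiating gives
\[
P'(x)=\frac{9}{5x}+\frac{6\sec^2x}{5\tan x}-\frac{\tan^2x}{\tan x-x},
\]
and clearing the positive denominator $5x\tan x(\tan x-x)$ reduces the sign of $P'$ to that of
\[
N(x)=9\tan x(\tan x-x)+6x\sec^2x(\tan x-x)-5x\tan^3x.
\]

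The main obstacle is precisely showing $N>0$, and this is what Lemma~\ref{lm} is tailored for. The key computation is the trigonometric identity $4\cos^3x\,N(x)=\vf(x)$, obtained by writing $\tan x=\sin x/\cos x$, multiplying out, and converting every cubic monomial in $\sin x,\cos x$ into the first and third harmonics via $\sin^3x=\frac14(3\sin x-\sin 3x)$, $\cos^3x=\frac14(3\cos x+\cos 3x)$, together with $\sin^2x\cos x=\frac14(\cos x-\cos 3x)$ and $\sin x\cos^2x=\frac14(\sin x+\sin 3x)$; all the $\sin x$ and $x^2\cos 3x$ contributions cancel and one is left with exactly $(9-24x^2)\cos x-9\cos 3x-4x\sin 3x$. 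Since $\cos x>0$ on $(0,\pi/2)$ and $\vf>0$ there by Lemma~\ref{lm}, we get $N>0$, hence $P'>0$; combined with $\lim_{x\to0^+}P(x)=0$ this yields $P>0$, which is inequality (b). I expect the verification of the identity $4\cos^3x\,N=\vf$ to be the only delicate point; everything else is monotonicity bookkeeping.
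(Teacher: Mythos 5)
Your proposal is correct, and the comparison with the paper splits naturally into two parts. For part (b) you follow essentially the same route as the paper: your function $P$ is exactly one fifth of the paper's auxiliary function $g(x)=6\log(\tan x/x)-5\log\bigl(3(\tan x-x)/x^3\bigr)$, and your identity $4\cos^3x\,N(x)=\vf(x)$ is precisely the content of the paper's computation, which writes $g'(x)=\vf(x)/\bigl(4x\cos^2 x\,\sin x\,(\tan x-x)\bigr)$ and then invokes Lemma \ref{lm}; I verified the identity and it holds (one small slip in your narration: there are no $x^2\cos 3x$ contributions to cancel --- the term $-24x^2\cos x$ simply survives untouched, and only the first-harmonic $\sin x$ terms cancel). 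For part (a), however, your argument is genuinely simpler than the paper's. The paper studies $g(x)=3-x^2-3x\cot x$, factors $g'(x)=(1+3\cot^2 x)h(x)$ with $h(x)=x-3\tan x/(3+\tan^2 x)$, and then needs a second differentiation, $h'(x)=4\tan^2 x/(3+\tan^2 x)^2$, to conclude; your choice $h(x)=(3-x^2)\sin x-3x\cos x$ collapses this to a single differentiation, $h'(x)=x(\sin x-x\cos x)>0$, with positivity coming from nothing more than $\tan x>x$ and $\cos x>0$. Both arguments are sound and prove the same monotonicity; yours buys a shorter, one-level proof of (a), at no cost elsewhere. Your handling of the endpoint behavior in (b) ($P\to 0$ as $x\to 0^+$, so $P'>0$ forces $P>0$) matches the paper's and is correct.
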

\bg

\begin{proof} 
\textbf{(a)} Consider the function $g$ defined on the interval $(0,\pi/2)$ by
\begin{equation}\label{E:eq21}
g(x)=3-x^2-3x\cot x
\end{equation}
Clearly we have
\begin{equation}\label{E:eq22}
g'(x)= x-3\cot x+3x \cot^2x=(1+3\cot^2x)h(x)
\end{equation}
where $h(x)=x-\dfrac{3\tan x}{3+\tan^2 x}$. Similarly $h$ has a derivative on $[0,\pi/2)$
that is given by
\begin{align*}
h'(x)&=1-3\frac{(3-\tan^2x)(1+\tan^2x)}{(3+\tan^2x)^2}\\
&=\frac{4\tan^2x}{(3+\tan^2x)^2}
\end{align*}
So, $h$ is monotonous increasing, with $h(0)=0$. This implies that $h$ is positive on the interval
$(0,\pi/2)$. Going back to \eqref{E:eq22} we conclude that
$g$ is also  monotonous increasing on $(0,\pi/2)$. Finally, since
$\lim_{x\to0^+}g(x)=0$, we conclude that $g$ is positive on $(0,\pi/2)$, but it is straightforward
to check that this is equivalent to the fact that $3(\tan x-x)> x^2\tan x $ for
$x\in (0,\pi/2)$ which is the desired inequality.\bg

\textbf{(b)} This inequality is more delicate to prove.
 Again, we will consider an auxiliary function. Let $g$ be the function defined on $(0,\pi/2)$ by
\begin{equation}\label{E:eq23}
g(x)=6\log\left(\frac{\tan x}{x}\right)-5\log\left(\frac{3(\tan x-x)}{x^3}\right).
\end{equation}

Clearly we have
\begin{align*}
g'(x)&=\frac{6}{\cos x\,\sin x}+\frac{9}{x}-\frac{5\sin^2x}{\cos x(\sin x-x\cos x)}\\
&=\frac{(9-6x^2)\cos x+x(4\sin^3x-3\sin x)-9\cos^3x}{x \cos x\,\sin x\, (\sin x-x\cos x)}
\end{align*}
So, recalling the expression of $\cos(3x)$ and $\sin(3x)$ in terms of $\cos x$ and $\sin x$ we see that
\begin{align}\label{E:eq24}
g'(x)&=\frac{(9-24x^2)\cos(x)-9\cos(3x)-4x \sin(3x)}{4x \cos^2 x\,\sin x\, (\tan x-x)},\notag\\
 &=\frac{\vf(x)}{4x \cos^2 x\,\sin x\, (\tan x-x)},
\end{align}
where $\vf$ is the function considered in Lemma \ref{lm}. Using the conclusion of that Lemma  we see that
$g$ is monotonous increasing on $(0,\pi/2)$. But $\lim_{x\to0^+} g(x)=0$, so $g$ is positive 
on $(0,\pi/2)$, and this
is equivalent to $3(\tan x-x)<x^{9/5} \tan^{6/5}x$ which is the desired inequality.
\end{proof}
\bg

\begin{corollary}\label{cor1}
The necessary and sufficient condition, on the real numbers $\alpha$ and $\beta$, for the following inequality
\begin{equation*}
1+\frac{x^2}{3}\left(\frac{\tan x}{x}\right)^\alpha<\frac{\tan x}{x}<1+\frac{x^2}{3} \left(\frac{\tan x}{x}\right)^{\beta}
\end{equation*}
to hold for every nonzero real $x$ from $(-\pi/2,\pi/2)$, is that $\alpha\leq1$ and $\beta\geq 6/5$.  
\end{corollary}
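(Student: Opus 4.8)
The plan is to reduce the statement to results already in hand: Theorem \ref{th3} will supply sufficiency and Proposition \ref{pr2} will supply necessity, once the displayed double inequality is put into the right shape. First I would observe that the three functions $x\mapsto \tan x/x$, $x\mapsto x^2$ and $x\mapsto 3(\tan x-x)/x^3$ are all even, so the displayed inequality at $x$ is identical to the one at $-x$; it therefore suffices to treat $x\in(0,\pi/2)$. Next, for such $x$ I would subtract $1$ from each member, use $\tan x/x-1=(\tan x-x)/x$, and multiply through by the positive factor $3/x^2$. This transforms the inequality into the equivalent chain
\[
\left(\frac{\tan x}{x}\right)^\alpha<\frac{3(\tan x-x)}{x^3}<\left(\frac{\tan x}{x}\right)^\beta,
\]
which is exactly the object studied in Proposition \ref{pr2} and Theorem \ref{th3}.

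For sufficiency, assume $\alpha\le1$ and $\beta\ge6/5$. Since $\tan x>x$ on $(0,\pi/2)$, the base $\tan x/x$ exceeds $1$, so $\gamma\mapsto(\tan x/x)^\gamma$ is increasing; hence $(\tan x/x)^\alpha\le(\tan x/x)^1$ and $(\tan x/x)^{6/5}\le(\tan x/x)^\beta$. Dividing the two inequalities of Theorem \ref{th3} by $x^3$ rewrites them as $(\tan x/x)^1<3(\tan x-x)/x^3$ and $3(\tan x-x)/x^3<(\tan x/x)^{6/5}$. Chaining these four relations yields the displayed chain above, and therefore the Corollary's inequality, for every nonzero $x\in(-\pi/2,\pi/2)$.

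For necessity, suppose the chain holds for all $x\in(0,\pi/2)$. Taking logarithms and dividing by $\log(\tan x/x)>0$, I would reintroduce the function $\vf(x)=\log\left(3(\tan x-x)/x^3\right)/\log(\tan x/x)$ from the proof of Proposition \ref{pr2}, turning the chain into $\alpha<\vf(x)<\beta$. Letting $x\to(\pi/2)^-$ and invoking \eqref{E:eqlm1} forces $\alpha\le1$, while letting $x\to0^+$ and invoking \eqref{E:eqlm2} forces $\beta\ge6/5$. I expect no genuine obstacle: the whole argument is bookkeeping around Theorem \ref{th3} and Proposition \ref{pr2}. The one point deserving care is that the Corollary quantifies over \emph{all} real $\alpha,\beta$, whereas Proposition \ref{pr2} is stated only for $\alpha,\beta\in[0,3]$; this is harmless because the limit computations \eqref{E:eqlm1}--\eqref{E:eqlm2} are insensitive to the size of the exponents, so I would lean on those limits directly rather than on the verbatim statement, while keeping track of strict versus non-strict inequalities throughout.
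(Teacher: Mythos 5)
Your proposal is correct and follows essentially the same route as the paper, whose proof is the one-line observation that the Corollary follows from Proposition \ref{pr2}, Theorem \ref{th3}, and evenness; you have simply filled in the algebraic reduction and the chaining details that the paper leaves implicit. Your remark that Proposition \ref{pr2} is stated only for $\alpha,\beta\in[0,3]$ while the Corollary quantifies over all reals, and that one should therefore invoke the limits \eqref{E:eqlm1}--\eqref{E:eqlm2} directly, is a legitimate refinement of a point the paper glosses over.
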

\begin{proof}
This follows from Proposition \ref{pr2}, Theorem \ref{th3}, and from the fact that the considered functions are even.
\end{proof}



\begin{thebibliography}{9}
\setlength{\itemsep}{5pt}

\bibitem{bec}
Becker,~M. and Stark,~E. ~L.,
\emph{ On a hierarchy of quolynomial inequalities for $\tan x$},
 Univ. Beograd. Publ. Elektrotechn. Fak. Ser. Mat. Fiz., No. \textbf {602-633}(1978), 133--138.

\bibitem{guo1}
Guo,~B.-N., Li,~W and Qi,~F.,
\emph{Proofs of Wilker's inequalities involving trigonometric functions.},
 Inequality Theory and Applications, \textbf {2},  (2003), 109--112.
Nova Science Publichers.

\bibitem{guo2}
Guo,~B.-N., Qiao,~B.-M.,  Qi,~F., and Li,~W
\emph{On new proofs of inequalities involving trigonometric functions.},
Math. Inequal. Appl., \textbf {6},  No.1 (2003), 19--22.

\bibitem{Qi}
Chen,~Ch.-P. and Qi, F.,
\emph{A Double Inequality for Remainder of Power Series of Tangent Function},
 Tamkang Journal of Mathematics, \textbf {34}, No. 4, (2003), 351--355.

\end{thebibliography}
\end{document}